\newtheorem{thm}{Theorem}[section]
\newtheorem{lem}[thm]{Lemma}
\newtheorem{pro}[thm]{Proposition}
\theoremstyle{definition}
\newtheorem{remk}{Remark}[section]
\newcommand{\M}{\mathbf M}
\newcommand{\MN}{\mathbf M^{(1+n)\times (n+1)}}
\newcommand{\B}{\mathbf B}
\newcommand{\R}{\mathbf R}
\newcommand{\tr}{\operatorname{tr}}
\newcommand{\dv}{\operatorname{div}}
\newcommand{\Dv}{\operatorname{Div}}
\newcommand{\Curl}{\operatorname{Curl}}
\newcommand{\wcon}{\rightharpoonup}
\numberwithin{equation}{section}
\begin{document}
  \author{Menglan Liao}
 \address{School of Mathematics\\  Jilin University\\ Changchun, Jilin Province 130012, China 
 \& 
 Department of Mathematics\\ Michigan State University\\ East Lansing, MI 48824, USA}
      \email{liaomen1@msu.edu}
      
 \author{Lianzhang Bao}
          \address{School of Mathematics\\ Jilin University\\ Changchun, Jilin Province 130012, China
          \& School of Mathematical Science\\ Zhejiang University\\ Hangzhou, Zhejiang Province 310027, China}
    \email{lzbao@jlu.edu.cn}

   \author{Baisheng Yan}
           \address{Department of Mathematics\\ Michigan State University\\ East Lansing, MI 48824, USA}
     \email{yanb@msu.edu}

\title{On weak closure of some diffusion equations}

\subjclass[2010]{Primary 35Q99,  35B35. Secondary 49J45}
\keywords{diffusion equation, approximating sequence, weak closure, compensated  compactness, quasiconvexity}

\begin{abstract}
  We study the closure  of approximating sequences of some diffusion equations under certain weak convergence.  A specific description of the closure under weak $H^1$-convergence is given, which reduces to the original equation when the equation is parabolic. However, the closure under strong $L^2$-convergence may be much larger, even for  parabolic equations.
\end{abstract}

\maketitle

\section{Introduction}
Let $\Omega$ be a bounded domain  in $\R^n$ with Lipschitz boundary,  $0<T<\infty$  a given number, and  $\Omega_T=\Omega\times  (0,T).$  We study the  diffusion equation
\begin{equation}\label{eq0}
u_t=\dv\sigma(Du) \quad \mbox{on $\; \Omega_T,$}
\end{equation}
where  $\sigma\colon \R^n\to \R^n$ is a given continuous function representing the diffusion flux, $u=u(x,t)$ is a unknown scalar function, and $Du$ and $u_t$ are the spatial gradient  and time derivative of $u$, respectively. 

If  $\sigma$ is {\em monotone}; that is, $(\sigma(q)-\sigma(p))\cdot (q-p)\ge 0$ for all $p,q\in\R^n$, then   the equation (\ref{eq0}) is called {\em parabolic}. For parabolic equations,  initial-boundary value problems  can be studied as an abstract Cauchy problem of a monotone operator on a Hilbert or Banach space  \cite{Ba,Br}; furthermore, under certain higher smoothness and stronger parabolicity conditions, such problems have been extensively studied in the theory of  quasilinear parabolic equation \cite{LSU,Li}.

Recently, for certain non-monotone functions $\sigma$, exact Lipschitz solutions  have been constructed for the initial-Neumann  problem  of (\ref{eq0}) in \cite{KY1,KY2,KY3}; in such cases, solutions can converge {\em weakly} to a function that is not a solution of (\ref{eq0}).

In this note,  assume that $\sigma\colon \R^n\to \R^n$  satisfies
\begin{equation}\label{gr0} 
  |\sigma(p)| \le c_1(|p| +1) \quad (p\in\R^n), 
\end{equation}
where $c_1>0$ is a constant, and we are interested in  {\em approximating sequences} of (\ref{eq0}),  especially when   $\sigma$ is non-monotone. 
Here, in general, by an approximating sequence  of (\ref{eq0}) we mean  a sequence $u^j$ in $L^2(\Omega_T)$ with $Du^j\in L^2(\Omega_T;\R^n)$ such that  
\begin{equation}\label{approx1}
 \lim_{j\to\infty} \|u^j_t-\dv\sigma(Du^j)\|_{H^{-1}(\Omega_T)}=0.
 \end{equation}
 
We attempt to characterize the limits of  approximating sequences  under the weak convergence in $H^1(\Omega_T)$.
Let
\begin{equation}\label{setA}
\Lambda=\{p\in\R^n\;|\; (\sigma(q)-\sigma(p))\cdot (q-p)\ge 0 \;\;\forall\,q\in\R^n\}.
\end{equation}
Then,  $\sigma$ is monotone if and only if  $\Lambda=\R^n.$ Note that $\Lambda$ may be empty; for example, if $\sigma$ is the function as shown in Figure \ref{fig2} below, then for function $-\sigma$ the set $\Lambda=\emptyset.$

\begin{lem}  For $p\in\R^n$, let   
\begin{equation}\label{set0}
\Gamma(p)=\{\beta\in \R^n\;|\; (\beta-\sigma(q))\cdot (p-q)\ge 0 \;\;\forall\, q\in \Lambda\}.
\end{equation}
{\em (If $\Lambda=\emptyset,$ then let $\Gamma(p)=\R^n$.)} Then $\Gamma(p)$ is a closed convex set in $\R^n$ containing $\sigma(p)$, with $\Gamma(p)=\{\sigma(p)\}$ if $p\in \Lambda^o,$ the interior of $\Lambda.$
\end{lem}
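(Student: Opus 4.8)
The plan is to treat the three assertions in increasing order of difficulty, exploiting the fact that $\Gamma(p)$ is built entirely from the defining monotonicity inequalities of $\Lambda$. For the closedness and convexity I would simply read off the definition (\ref{set0}): rewriting the constraint as $\beta\cdot(p-q)\ge \sigma(q)\cdot(p-q)$ shows that, for each fixed $q\in\Lambda$, the set of admissible $\beta$ is a closed half-space of $\R^n$ (the whole space when $q=p$). Hence $\Gamma(p)=\bigcap_{q\in\Lambda}\{\beta : (\beta-\sigma(q))\cdot(p-q)\ge0\}$ is an intersection of closed convex sets and is therefore closed and convex; the degenerate convention $\Gamma(p)=\R^n$ when $\Lambda=\emptyset$ is trivially closed and convex as well.

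To see that $\sigma(p)\in\Gamma(p)$, I would fix $q\in\Lambda$ and invoke the very definition (\ref{setA}) of $\Lambda$: since $q\in\Lambda$, the monotonicity inequality $(\sigma(\tilde q)-\sigma(q))\cdot(\tilde q-q)\ge0$ holds for every $\tilde q\in\R^n$, and choosing $\tilde q=p$ yields $(\sigma(p)-\sigma(q))\cdot(p-q)\ge0$. As $q\in\Lambda$ was arbitrary, $\beta=\sigma(p)$ satisfies all the constraints defining $\Gamma(p)$, i.e.\ $\sigma(p)\in\Gamma(p)$.

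The only step with any real content is the last one. Assume $p\in\Lambda^o$ and take $\beta\in\Gamma(p)$; the goal is $\beta=\sigma(p)$. Since $\Lambda^o$ is open, there is $\varepsilon>0$ with the ball $B(p,\varepsilon)\subset\Lambda$, so for any unit vector $v$ and any $0<t<\varepsilon$ the point $q=p-tv$ lies in $\Lambda$ and may be used in (\ref{set0}). With this choice $p-q=tv$, so the constraint reads $(\beta-\sigma(p-tv))\cdot tv\ge0$; dividing by $t>0$ and letting $t\to0^+$, the continuity of $\sigma$ gives $(\beta-\sigma(p))\cdot v\ge0$. Applying this to both $v$ and $-v$ forces $(\beta-\sigma(p))\cdot v=0$ for every $v$, whence $\beta=\sigma(p)$ and $\Gamma(p)=\{\sigma(p)\}$.

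The subtlety to keep an eye on is precisely why $p$ must be \emph{interior} to $\Lambda$: the collapsing argument requires a full neighborhood of admissible test points $q$ so that every direction $v$ is available, and it is exactly this access to all directions, combined with the continuity of $\sigma$, that pins $\beta$ down to the single value $\sigma(p)$. I do not anticipate any genuine obstruction beyond organizing this limiting argument cleanly; everything else is routine convex geometry.
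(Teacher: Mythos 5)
Your proposal is correct and follows essentially the same route as the paper: closedness and convexity from the definition (an intersection of closed half-spaces), membership of $\sigma(p)$ from the defining inequality of $\Lambda$ with $\tilde q=p$, and the collapse $\Gamma(p)=\{\sigma(p)\}$ for $p\in\Lambda^o$ by testing with $q=p-t v$, dividing by $t$, letting $t\to0^+$ via continuity of $\sigma$, and using both $v$ and $-v$. The paper's proof is just a terser version of the same limiting argument (with $\eta$ in place of your unit vector $v$), so there is nothing to add.
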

\begin{proof} Clearly $\Gamma(p)$ is closed, convex and contains $\sigma(p).$ Let $p\in \Lambda^o$ and $\beta\in \Gamma(p).$ Let $q=p-\epsilon \eta$, where $\epsilon>0$ and $\eta\in\R^n$. Then, for all $\epsilon>0$ sufficiently small, we have $q\in \Lambda$ and thus $(\beta-\sigma(q))\cdot (p-q)=(\beta-\sigma(p-\epsilon\eta))\cdot \epsilon \eta \ge 0;$ hence $(\beta-\sigma(p-\epsilon\eta))\cdot   \eta \ge 0$,  which by letting $\epsilon\to 0^+$  yields $(\beta-\sigma(p))\cdot   \eta \ge 0$. Since this inequality holds for all $\eta\in\R^n$, it follows  that $\beta=\sigma(p).$ Hence $\Gamma(p)=\{\sigma(p)\}$ if $p\in \Lambda^o.$
\end{proof}

In what follows, we denote by $g(p,\beta)$  the {\em convex hull} of function $|\sigma(p)-\beta|^2$ on $\R^n\times \R^n$. For $p\in\R^n$,  define
\begin{equation}\label{setZg}
Z(p)=\{\beta\in \R^n\;|\; g(p,\beta)=0\}, \quad \Sigma(p)=\Gamma(p)\cap Z(p).
\end{equation}
Then $Z(p)$ is a closed convex set in $\R^n$ containing $\sigma(p)$, and thus $\Sigma(p)$ is also a closed convex set in $\R^n$ containing $\sigma(p)$,  with $\Sigma(p)=\{\sigma(p)\}$ if $p\in \Lambda^o.$
 
 \begin{remk} (a) The set $\Sigma(p)$ can be very complicated, depending on the structure of the function $\sigma.$ In the one spatial dimension, two special functions of diffusion function $\sigma(p)$ are given as shown in Figures \ref{fig1} and \ref{fig2}.
 
 (b) Some interesting special structures in the set $\Sigma(p)$ can be characterized by a variational principle (see Proposition \ref{pro1}), which may have some relevance to the existence results in the recent work \cite{KY1,KY2,KY3}.
 \end{remk}
 
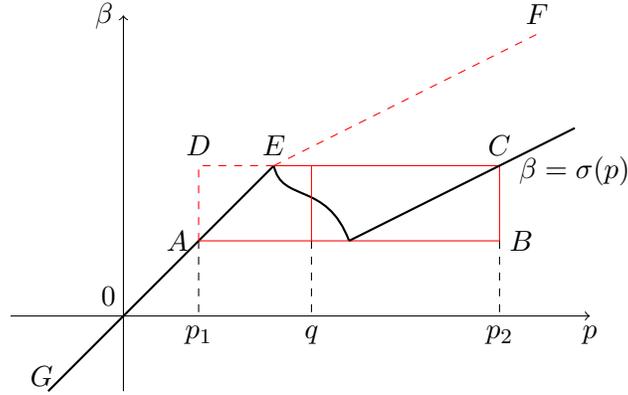
\begin{figure}[ht]
\begin{center}
\begin{tikzpicture}[scale =1]
    \draw[->] (-1.5,0) -- (6.2,0);
	\draw[->] (0,-1) -- (0,4);
 \draw[thick] (-1,-1)--(2,2);
 \draw[thick] (2,2) ..controls (2.1,1.5) and (2.7,1.8) ..(3,1);
  \draw[red] (2,2)--(5,2);
  \draw[dashed] (2.5,1)--(2.5,0);
      \draw[dashed] (1,1)--(1,0);
     \draw[dashed] (5,1)--(5,0);
      \draw[red] (5,2)--(5,1);
   \draw[red] (5,1) --(1,1);
     \draw[red,dashed] (2,2) --(1,2);
       \draw[red,dashed] (1,1) --(1,2);
   \draw[red,dashed] (2,2) --(5.5,3.75);
    \draw[thick] (3,1)--(6,2.5);
   \draw[red] (2.5,2)--(2.5,1);
    \draw (-0.2,0) node[above] {{$0$}};
    \draw (6, 1.6) node[above] {$\beta=\sigma(p)$};
 
 \draw (0, 4) node[left] {$\beta$}; 
   \draw (6.2, 0) node[below] {$p$}; 
        \draw (1, 0) node[below] {$p_1$};
          \draw (5, 0) node[below] {$p_2$};
                    \draw (2.5, 0) node[below] {$q$};
                    \draw(5,1) node[right]{$B$};
                    \draw(5,2) node[above]{$C$};
                    \draw(2,2) node[above]{$E$};             
                    \draw(1,1) node[left]{$A$};
                     \draw(-0.8,-0.8) node[left]{$G$};
                      \draw(1,2) node[above]{$D$};
                    \draw(5.5,3.75) node[above]{$F$};                 
\end{tikzpicture}
\end{center}
\caption{A special function $\sigma(p)$ in  1-D   (including the H\"ollig function \cite{Ho}). Here the set $\Lambda$ is $(-\infty,p_1]\cup [p_2,\infty).$ For all $q$ the set $Z(q)$ is the  vertical closed half-ray below the graph $GAEF.$ For $q\in [p_1,p_2],$ the set $\Gamma(q)$ is the vertical  cross-section of the closed rectangle $ABCD$, and the set $\Sigma(q)$ is the vertical  cross-section of the closed trapezoid $ABCE$.}
\label{fig1}
\end{figure}

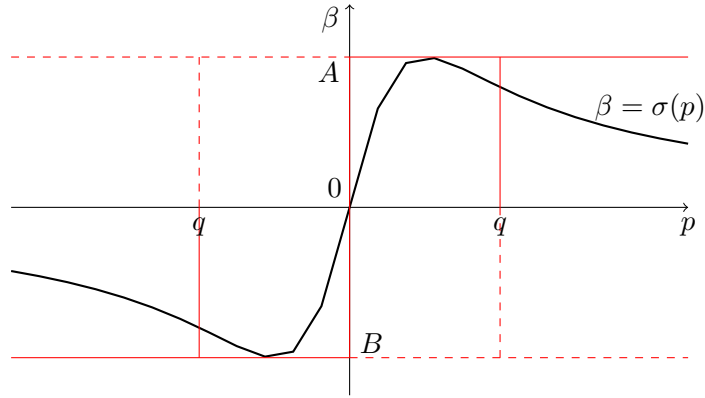
\begin{figure}[ht]
\begin{center}
\begin{tikzpicture}[scale =1]
    \draw[->] (-4.5,0) -- (4.5,0);
	\draw[->] (0,-2.5) -- (0,2.7);  
    \draw [thick,domain=-4.5:4.5] plot ({\x}, {4*(\x) /(1+(\x)*(\x))});   
    \draw (-0.2,0) node[above] {{$0$}};
    \draw (4, 1) node[above] {$\beta=\sigma(p)$};
    \draw[red,dashed] (-4.5,2) --(0,2);
    \draw[red,dashed] (0,-2) --(4.5,-2);
     \draw[red] (0,2) --(4.5,2);
    \draw[red] (-4.5,-2) --(0,-2);
     \draw[red] (0,2) --(0,-2);
    \draw[red] (2,0)--(2,2);
    \draw[red] (-2,0)--(-2,-2);
     \draw[red,dashed] (-2,0) --(-2,2);
     \draw[red,dashed] (2,0) --(2,-2);
    
 \draw (0, 2.5) node[left] {$\beta$}; 
   \draw (4.5, 0) node[below] {$p$};
                    \draw (2, 0) node[below] {$q$}; 
                     \draw (-2, 0) node[below] {$q$}; 
                    \draw(0,-1.8)  node[right]{$B$};             
                    \draw(0,1.8) node[left]{$A$};               
\end{tikzpicture}
\end{center}
\caption{Another special function $\sigma(p)$ in 1-D  with $\sigma(p)\to 0$ as $|p|\to \infty$  (including the Perona-Malik function \cite{PM}). Here the set $\Lambda$ is the point $\{0\}$. For all $q$ the set $Z(q)$ is the  vertical cross-section between two horizontal lines $\beta=A$ and $\beta=B$. The set $\Sigma(0)$ is the line segment $AB$  and, for $q\ne 0$, $\Sigma(q)$ is the  vertical line segment as shown in the first or third quadrant.}
\label{fig2}
\end{figure}

\begin{thm} \label{thm1}
 Let $u^j$ be an approximating sequence of $(\ref{eq0})$ in $H^1(\Omega_T)$ such that $u^j$ converges weakly to $ u^*$ in $H^1(\Omega_T).$ Then
\begin{equation}
\label{closure} 
u^*_t \in \dv \Sigma(Du^*)
\end{equation}
in the sense that $u^*_t=\dv\bar\sigma$ holds in $H^{-1}(\Omega_T)$  for some  $\bar\sigma\in L^2(\Omega_T;\R^n)$ with $\bar\sigma(x,t)\in \Sigma(Du^*(x,t))$ for almost every $(x,t)\in\Omega_T.$ In particular, if $\sigma$ is monotone, then $u^*$ is a solution of $(\ref{eq0})$.
\end{thm}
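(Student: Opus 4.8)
The plan is to combine a straightforward weak-limit passage in the equation with a compensated-compactness argument for the flux. First I would observe that $Du^j$ is bounded in $L^2(\Omega_T;\R^n)$, so the growth bound (\ref{gr0}) makes $\sigma(Du^j)$ bounded in $L^2(\Omega_T;\R^n)$; passing to a subsequence I may then assume $\sigma(Du^j)\wcon\bar\sigma$ in $L^2(\Omega_T;\R^n)$ for some $\bar\sigma$, which costs nothing since only the existence of $\bar\sigma$ is asserted. Simultaneously $Du^j\wcon Du^*$ and $u^j_t\wcon u^*_t$ in $L^2$, while $u^j\to u^*$ strongly in $L^2(\Omega_T)$ by the Rellich embedding $H^1(\Omega_T)\hookrightarrow L^2(\Omega_T)$. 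Passing to the limit in (\ref{approx1}) is then immediate: for every $\phi\in H^1_0(\Omega_T)$ one has $\int_{\Omega_T}\sigma(Du^j)\cdot D_x\phi\to\int_{\Omega_T}\bar\sigma\cdot D_x\phi$ and $\int_{\Omega_T}u^j_t\,\phi\to\int_{\Omega_T}u^*_t\,\phi$, and since the defect $u^j_t-\dv\sigma(Du^j)\to0$ strongly in $H^{-1}(\Omega_T)$, this yields $u^*_t=\dv\bar\sigma$ in $H^{-1}(\Omega_T)$. It remains only to prove that $\bar\sigma(x,t)\in\Sigma(Du^*(x,t))=\Gamma(Du^*(x,t))\cap Z(Du^*(x,t))$ for a.e.\ $(x,t)$.

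The membership $\bar\sigma\in Z(Du^*)$ I would obtain from convexity and weak lower semicontinuity. By definition $g$ is convex with $0\le g(p,\beta)\le|\sigma(p)-\beta|^2$, and by (\ref{gr0}) it has quadratic growth, hence is finite and continuous on $\R^n\times\R^n$. Applying weak lower semicontinuity of convex integrands to the pair $(Du^j,\sigma(Du^j))\wcon(Du^*,\bar\sigma)$ in $L^2(\Omega_T;\R^{2n})$ gives
\begin{equation*}
\int_{\Omega_T}g(Du^*,\bar\sigma)\,dx\,dt\le\liminf_{j\to\infty}\int_{\Omega_T}g(Du^j,\sigma(Du^j))\,dx\,dt\le\liminf_{j\to\infty}\int_{\Omega_T}|\sigma(Du^j)-\sigma(Du^j)|^2\,dx\,dt=0.
\end{equation*}
Since $g\ge0$, this forces $g(Du^*,\bar\sigma)=0$ a.e., i.e.\ $\bar\sigma\in Z(Du^*)$ a.e.

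The membership $\bar\sigma\in\Gamma(Du^*)$ is the heart of the matter and is where I expect the main difficulty. Fix $q\in\Lambda$; by (\ref{setA}) the pointwise inequality $m^j:=(\sigma(Du^j)-\sigma(q))\cdot(Du^j-q)\ge0$ holds a.e. To pass to the limit in the only nonlinear product, $\sigma(Du^j)\cdot Du^j$, I would invoke the div--curl lemma in the space--time variable $z=(x,t)\in\R^{n+1}$, applied to the $L^2$-bounded fields $P^j=(-\sigma(Du^j),u^j)$ and $Q^j=D_z u^j=(Du^j,u^j_t)$. Here $\Dv P^j=u^j_t-\dv\sigma(Du^j)\to0$ in $H^{-1}(\Omega_T)$ is precompact, while $Q^j$ is a full space--time gradient, so $\Curl Q^j=0$; the lemma then gives $P^j\cdot Q^j\wcon(-\bar\sigma,u^*)\cdot(Du^*,u^*_t)$ in $\mathcal D'(\Omega_T)$. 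Writing $P^j\cdot Q^j=-\sigma(Du^j)\cdot Du^j+u^j u^j_t$ and using that $u^j u^j_t\wcon u^* u^*_t$ (a product of a strongly and a weakly $L^2$-convergent sequence), the $u^j u^j_t$ contributions cancel and I obtain $\sigma(Du^j)\cdot Du^j\wcon\bar\sigma\cdot Du^*$ in $\mathcal D'(\Omega_T)$. The other terms of $m^j$ are linear in a single weakly convergent sequence and pass to the limit directly, whence $m^j\wcon(\bar\sigma-\sigma(q))\cdot(Du^*-q)$ in $\mathcal D'(\Omega_T)$. Since $m^j\ge0$, the limit is nonnegative, giving $(\bar\sigma-\sigma(q))\cdot(Du^*-q)\ge0$ a.e.\ for the fixed $q$. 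Running this over a countable dense subset of $\Lambda$ and using the continuity of $\sigma$ to fill in the closure, the inequality holds a.e.\ for all $q\in\Lambda$ simultaneously, i.e.\ $\bar\sigma\in\Gamma(Du^*)$ a.e.

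Combining the two memberships yields $\bar\sigma\in\Sigma(Du^*)$ a.e., which together with $u^*_t=\dv\bar\sigma$ proves (\ref{closure}). Finally, if $\sigma$ is monotone then $\Lambda=\R^n=\Lambda^o$, so $\Sigma(Du^*)=\{\sigma(Du^*)\}$ (as recorded after (\ref{setZg})), forcing $\bar\sigma=\sigma(Du^*)$ a.e.\ and hence $u^*_t=\dv\sigma(Du^*)$, so $u^*$ solves (\ref{eq0}). The one genuinely delicate point is the div--curl step: one must verify the hypotheses carefully in the space--time setting and justify the distributional convergence of $\sigma(Du^j)\cdot Du^j$, the isolation of the spurious $u^j u^j_t$ term being the key simplification afforded by the strong $L^2$ convergence of $u^j$.
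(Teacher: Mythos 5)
Your proposal is correct and follows essentially the same route as the paper: weak lower semicontinuity of the convex hull $g$ to get $\bar\sigma\in Z(Du^*)$, and the div--curl lemma applied to the space--time fields $(\pm\sigma(Du^j),\mp u^j)$ and $\nabla_{(x,t)}u^j$ to pass to the limit in $\sigma(Du^j)\cdot Du^j$ and get $\bar\sigma\in\Gamma(Du^*)$. In fact you are slightly more careful than the paper on two implicit points --- the strong-times-weak convergence of $u^ju^j_t$ and the countable-dense-subset argument needed to make the inequality hold for all $q\in\Lambda$ simultaneously.
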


The weak convergence of $u^j$ is necessary in this result. In fact, we have the following result.

\begin{thm}\label{thm2} Let  $\bar u \in H^1(\Omega_T)$ be such that there exists a function  $ \bar v \in H^1(\Omega_T;\R^{n})$ satisfying  $\bar u=\dv \bar v$ and $\bar v_t\in Z(D\bar u)$ almost everywhere on $\Omega_T$. Then there exists an approximating sequence 
$u^j\in \bar u+ H_0^1(\Omega_T)$ such that    $u^j\to \bar u$ strongly in $L^2(\Omega_T).$
\end{thm}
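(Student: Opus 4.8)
The plan is to recast (\ref{eq0}) as a differential inclusion for the pair $(u,B)$, where $B$ plays the role of the flux, and to produce the approximating sequence by a convex--integration construction. Set $B=\bar v_t$. Differentiating $\bar u=\dv\bar v$ in $t$ gives the exact linear relation $\bar u_t=\dv\bar v_t$ in $H^{-1}(\Omega_T)$, so $(\bar u,\bar v_t)$ satisfies the linear constraint $\mathcal A(u,B):=u_t-\dv B=0$; moreover $\bar v_t\in Z(D\bar u)$ means exactly that the pointwise values $(D\bar u,\bar v_t)$ lie in $\co(K)$, where $K=\{(p,\sigma(p)):p\in\R^n\}$ is the graph of $\sigma$ (this is the content of $g(p,\beta)=0\iff\beta\in Z(p)$, since $g$ is the convex hull of $|\sigma(p)-\beta|^2$). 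Thus $(\bar u,\bar v_t)$ is a \emph{subsolution}: it is $\mathcal A$--free and takes values in the convex hull of the target $K$.

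The key reduction is that it suffices to find $u^j\in\bar u+H_0^1(\Omega_T)$ and $B^j\in L^2(\Omega_T;\R^n)$ with: (i) $u^j_t=\dv B^j$ in $H^{-1}(\Omega_T)$; (ii) $\|B^j-\sigma(Du^j)\|_{L^2(\Omega_T)}\to 0$; and (iii) $u^j\to\bar u$ in $L^2(\Omega_T)$. Indeed, (i) yields $u^j_t-\dv\sigma(Du^j)=\dv(B^j-\sigma(Du^j))$, and since $\dv$ maps $L^2(\Omega_T;\R^n)$ boundedly into $H^{-1}(\Omega_T)$, (ii) forces $\|u^j_t-\dv\sigma(Du^j)\|_{H^{-1}(\Omega_T)}\to 0$; together with (iii) this is precisely the claim. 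The problem thus becomes: from the subsolution $(\bar u,\bar v_t)$ with values in $\co(K)$, construct $\mathcal A$--free pairs $(u^j,B^j)$ whose values converge to $K$ in $L^2$, with the potential $u^j$ converging strongly and retaining the boundary datum of $\bar u$.

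The construction itself is a localized oscillation argument. The structural fact that makes it work is that the wave cone of the scalar first--order operator $\mathcal A(u,B)=u_t-\dv B$ is the whole amplitude space: for every $(\hat u,\hat B)\in\R\times\R^n$ the single equation $\eta\hat u=\xi\cdot\hat B$ has a nonzero solution $(\xi,\eta)\in\R^{n+1}$, so every direction supports an $\mathcal A$--free plane wave. Consequently the $\mathcal A$--convex hull of $K$ equals the ordinary convex hull, i.e.\ the set $\{(p,\beta):\beta\in Z(p)\}$, and the admissible relaxation is precisely $Z$. One then partitions $\Omega_T$ into small space--time cubes on which $(D\bar u,\bar v_t)$ is nearly constant and, on each cube, superposes compactly supported $\mathcal A$--free oscillations that move $(Du,B)$ from its convex-hull value toward $K$ while preserving the cube average. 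These oscillations are taken with high frequency and $O(1/N)$ amplitude in the potential $u$ (so $Du^j$ oscillates at order one while $u^j$ oscillates only at order $1/N$); hence $u^j\to\bar u$ strongly in $L^2$, and, being compactly supported in $\Omega_T$, they keep $u^j\in\bar u+H_0^1(\Omega_T)$. Iterating over successively finer partitions drives $\dist((Du^j,B^j),K)\to 0$, and by the continuity and linear growth of $\sigma$ this yields (ii).

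I expect the heart of the difficulty to be the convex--integration step, namely turning ``the subsolution lies in $\co(K)$'' into ``there are $\mathcal A$--free oscillations reaching $K$.'' Two points need care. First, realizing an arbitrary point of $\co(K)$ requires in general a finite \emph{iterated} family of plane waves along the wave--cone directions $\eta\hat u=\xi\cdot\hat B$ rather than a single laminate: a two--state laminate would additionally force the normal--flux condition $(\sigma(q_1)-\sigma(q_2))\cdot(q_1-q_2)=0$, which fails for generic pairs, whereas the full wave cone, which also contains the pure--flux directions $\hat u=0$ realized as divergence--free oscillations of $B$, is what restores the entire convex hull. Second, one must control the errors coming from the cut--offs that localize each plane wave and from the variation of $(D\bar u,\bar v_t)$ across cells, showing they are lower order and vanish in the limit while a uniform $L^2$ bound on $Du^j$ is maintained via the linear growth (\ref{gr0}) of $\sigma$. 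The remaining points, the $L^2$ smallness of the potential oscillation and the boundedness of $\dv\colon L^2\to H^{-1}$, are routine.
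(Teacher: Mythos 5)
There is a genuine gap, and it sits exactly where you located the heart of the matter: the wave-cone claim. You computed the cone for oscillations of the \emph{values} $(u,B)$ --- for an amplitude $(\hat u,\hat B)$ the single equation $\eta\hat u=\xi\cdot\hat B$ is indeed solvable for every amplitude --- but the inclusion you must open up constrains $(Du,B)$, and your construction (correctly) keeps the oscillation of $u$ itself at amplitude $O(1/N)$. These two requirements are incompatible with a full wave cone. Write a high-frequency wave as $u=\bar u+\lambda^{-1}f(\lambda\theta)$, $B=\bar B+b(\lambda\theta)$, $\theta=x\cdot\xi+t\eta$: then $Du$ oscillates with amplitude $\xi f'$, i.e.\ parallel to $\xi$, while $u_t=\eta f'(\lambda\theta)$ is of order one, so exactness of $u_t=\dv B=\lambda\,\xi\cdot b'(\lambda\theta)$ forces $\xi\cdot b=O(\lambda^{-1})$: the order-one part of the $B$-oscillation must be \emph{perpendicular} to $\xi$. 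Hence the admissible amplitudes for $(Du,B)$ form the cone $\{(\hat p,\hat\beta):\hat p\cdot\hat\beta=0\}$ --- precisely the restriction you noticed for two-state laminates --- and adjoining the pure-flux directions $(0,\hat\beta)$ changes nothing, since they already lie in this cone. Your assertion that the $\mathcal A$-free hull of the graph $K$ is the full convex hull $\{(p,\beta):\beta\in Z(p)\}$ is therefore unsupported, and it is false in general: for the H\"ollig-type $\sigma$ of Figure \ref{fig1} in one space dimension the cone directions are horizontal and vertical; a horizontal splitting needs two hull points at the same height, a vertical one needs hull points above \emph{and} below the target, and an induction on the order of lamination shows the hull of $K$ stays bounded below over $[p_1,p_2]$, whereas $Z(q)$ there is an entire half-line. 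Since Theorem \ref{thm2} assumes only $\bar v_t\in Z(D\bar u)$, your scheme cannot reach the admissible data. (A secondary, fixable point: your identification of $\{\beta\in Z(p)\}$ with the convex hull of the graph is asserted rather than proved; only the inclusion $\supseteq$ is immediate.)

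The way out --- and this is what the paper's proof does --- is to give up exact preservation of the linear constraint along the iteration and penalize it instead. The paper introduces $I(u,v)=\|v_t-\sigma(Du)\|^2_{L^2}+\|u-\dv v\|^2_{L^2}$ (so your $B$ is $v_t$ and the constraint becomes an $L^2$-small quantity), proves $\|u_t-\dv\sigma(Du)\|_{H^{-1}}\le\sqrt{I(u,v)}$ (your reduction step, which is fine), and then computes the quasiconvex envelope of the integrand, $f^\#=g(p,\beta)+(s-\tr B)^2$ (Proposition \ref{lem2}). The decisive mechanism is visible in that proof: once a quadratic violation of the constraint is tolerated, oscillation in an \emph{arbitrary} direction $(q,\alpha)$ becomes admissible at the cost $(\alpha\cdot q)^2$, which is of higher order under the rescaling $(q,\alpha)\mapsto(kq,k\alpha)$, $k\to 0^+$; that is exactly why the relaxation vanishes on all of $Z$ and not merely on a lamination hull. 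Dacorogna's relaxation theorem then delivers $w^j=(u^j,v^j)\in\bar w+H^1_0(\Omega_T;\R^{1+n})$ with $I(w^j)\to 0$ and $w^j\to\bar w$ in $L^2$, which is the desired sequence. If you want to keep your differential-inclusion formulation you must build in the same flexibility, requiring only $u^j_t-\dv B^j\to 0$ in $H^{-1}$ rather than $=0$; with exact $\mathcal A$-freeness the reachable set is genuinely smaller than $Z$ --- consistent with the paper's closing remark that exact constructions in the spirit of Kim--Yan require the smaller sets $\Sigma(p)$, or even $R(p)\subset\Sigma(p)$, rather than $Z(p)$.
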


Examples of the functions $\bar u$ and $\bar v=(\bar v_1,\dots,\bar v_n)$ in the theorem are given by
\[
\bar u=p_1x_1+\cdots + p_nx_n,\quad \bar v_i=\frac12 p_ix_i^2+\beta_i t \quad (i=1,2,\cdots,n),
\]
where $p=(p_1,\dots,p_n)$ and $\beta=(\beta_1,\dots,\beta_n)$ are such that $g(p,\beta)=0.$

The function $\bar u$ in Theorem \ref{thm2}  satisfies $\bar u_t\in \dv Z(D\bar u).$ In general,  the set $Z(p)$ is much larger than the set $\Sigma(p)$ (see Figures \ref{fig1} and \ref{fig2}).  Theorem \ref{thm2} holds even when $\sigma$ is monotone; therefore, even for parabolic equations, there are approximating sequences that  converge in the $L^2$-norm  but not in the weak $H^1$ convergence. 

An interesting problem is to study that for what functions $\bar u\in H^1(\Omega_T)$ one can find an exact  solution of (\ref{eq0}) in $\bar u+ H_0^1(\Omega_T);$ such a function $\bar u$ has been called a {\em subsolution} of  (\ref{eq0}) in the recent work  \cite{KY2,KY3}. From the constructions in \cite{KY2,KY3}, Proposition \ref{pro1} below suggests that a subsolution $\bar u$ should satisfy  a condition $\bar u_t\in \dv R(D\bar u)$ for a smaller set $R(p)$ contained in $\Sigma(p).$  
  
  \section{Compensated compactness: Proof of Theorem \ref{thm1}}
Let $N\ge 1$ and $G$ be a bounded domain in $\R^N.$ Given a vector function $U=(U_1,\dots,U_N)\in L^2(G;\R^N),$ let
\[
\Dv U=\sum_{k=1}^N \frac{\partial U_k}{\partial y_k},\quad \Curl U=(\frac{\partial U_k}{\partial y_l}-\frac{\partial U_l}{\partial y_k})_{1\le k,l\le N}.
\]
Then $\Dv U\in H^{-1}(G)$ and $\Curl U\in H^{-1}(G;\M^{N\times N})$ are  well-defined, where $\M^{N\times N}$ is the space of all $N\times N$ matrices.

We need the following compensated compactness result known as the div-curl lemma \cite{T}.

\begin{lem}[div-curl lemma] Let $V^j, \,W^j\in L^2(G;\R^N)$ satisfy  $V^j\wcon V^*, \,W^j\wcon W^*$ weakly in $L^2(G;\R^N)$.  Assume $\Dv V^j$  and $\Curl W^j$ converge  strongly in $H^{-1}(G)$ and $H^{-1}(G;\M^{N\times N}),$ respectively. Then $V^j\cdot W^j$ converges to $V^*\cdot W^*$ in the sense of distributions on $G;$ that is, for all $\phi\in C^\infty_0(G),$
\[
\lim_{j\to\infty}\int_G \phi (y) V^j(y)\cdot W^j(y) \,dy = \int_G\phi (y) V^*(y) \cdot W^*(y)\,dy.
\]
\end{lem}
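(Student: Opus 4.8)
The plan is to first normalize the two weak limits to zero, then exploit the curl constraint through a Helmholtz decomposition on a ball, handling the resulting gradient part by a duality (integration-by-parts) argument and the divergence-free remainder by a compactness argument. Begin with the reduction: since $\Dv$ and $\Curl$ are bounded linear operators from $L^2(G;\R^N)$ into $H^{-1}$, the weak convergence $V^j\wcon V^*$ forces $\Dv V^j\wcon \Dv V^*$ weakly in $H^{-1}(G)$, and as a strong limit must coincide with the weak limit, the hypotheses upgrade to $\Dv V^j\to \Dv V^*$ and $\Curl W^j\to \Curl W^*$ \emph{strongly} in $H^{-1}$. Writing $\tilde V^j=V^j-V^*$ and $\tilde W^j=W^j-W^*$, we then have $\tilde V^j\wcon 0$, $\tilde W^j\wcon 0$, $\Dv\tilde V^j\to 0$ and $\Curl\tilde W^j\to 0$ strongly. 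Expanding $V^j\cdot W^j$ into four terms and observing that every term containing exactly one of the fixed factors $V^*$ or $W^*$ converges to the expected value by weak–strong pairing against $\phi V^*,\phi W^*\in L^2$, the assertion reduces to proving $\int_G \phi\,\tilde V^j\cdot\tilde W^j\,dy\to 0$ for each $\phi\in C_0^\infty(G)$. From here on I drop the tildes.

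Fix $\phi$ and choose a ball $B$ with $\operatorname{supp}\phi\subset\subset B\subset\subset G$. On $B$ I would run a Helmholtz decomposition adapted to the curl constraint: let $p^j\in H_0^1(B)$ solve $\Delta p^j=\Dv W^j$ weakly, so $\|p^j\|_{H_0^1(B)}\le C\|W^j\|_{L^2}$ is bounded, and set $z^j=W^j-\nabla p^j$. Then $\Dv z^j=0$ in $\mathcal D'(B)$, while $\Curl z^j=\Curl W^j\to 0$ strongly in $H^{-1}(B)$. For the gradient part, integration by parts gives $\int_B \phi V^j\cdot\nabla p^j=-\langle \Dv V^j,\phi p^j\rangle-\int_B(\nabla\phi\cdot V^j)p^j$. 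The first term tends to $0$ because $\Dv V^j\to 0$ strongly in $H^{-1}$ while $\phi p^j$ stays bounded in $H_0^1$; the second tends to $0$ because $p^j\to p^*$ strongly in $L^2(B)$ by Rellich, so $\nabla\phi\,p^j\to\nabla\phi\,p^*$ strongly in $L^2$, which pairs to zero against $V^j\wcon 0$.

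It remains to treat the solenoidal remainder $\int_B \phi V^j\cdot z^j$, and this is the crux of the argument, since here both factors converge only weakly. I must therefore upgrade $z^j$ to strong convergence. The key is the div–curl elliptic inequality: each component satisfies $\Delta z^j_k=\partial_k(\Dv z^j)+\sum_l\partial_l(\Curl z^j)_{kl}$, so interior elliptic regularity yields, for $\operatorname{supp}\phi\subset\subset\omega\subset\subset B$, a bound of the form $\|z\|_{L^2(\omega)}\le C\big(\|\Dv z\|_{H^{-1}(B)}+\|\Curl z\|_{H^{-1}(B)}+\|z\|_{H^{-1}(B)}\big)$. Applying this to the differences $z^j-z^m$, whose divergences vanish, whose curls are Cauchy in $H^{-1}$, and which are Cauchy in $H^{-1}$ (the bounded sequence $z^j$ is precompact in $H^{-1}(B)$ by Rellich), shows that $z^j$ is Cauchy, hence strongly convergent, in $L^2(\omega)$. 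Weak–strong pairing then gives $\int_B \phi V^j\cdot z^j\to 0$.

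Combining the gradient and remainder parts yields $\int_G \phi V^j\cdot W^j\to 0$. Because the argument above applies along any subsequence and always produces the same limit $0$, the standard subsequence principle upgrades this to convergence of the full sequence, which is the claim. The main obstacle is precisely the strong convergence of the divergence-free remainder $z^j$, i.e.\ establishing and invoking the div–curl interior estimate together with Rellich compactness; the rest of the proof is weak–strong bookkeeping.
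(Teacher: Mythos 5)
Your proof is correct, but there is nothing in the paper to compare it against: the paper does not prove the div--curl lemma at all, it simply defers to Tartar \cite{T}. So the relevant comparison is with the classical Tartar--Murat argument, which localizes with a cutoff and then works on the Fourier side, using Plancherel's theorem and an algebraic analysis of the symbols of $\Dv$ and $\Curl$ (low frequencies converge by compactness; high frequencies are controlled because the two symbol constraints are complementary). Your route replaces the Fourier analysis by a Helmholtz splitting $W^j=\nabla p^j+z^j$ on a ball $B$, after the standard normalization $V^*=W^*=0$: the gradient part is killed by duality against $\Dv V^j\to 0$ in $H^{-1}$ together with Rellich compactness for $p^j$, and the solenoidal part is upgraded to strong $L^2_{loc}$ convergence via the interior estimate $\|z\|_{L^2(\omega)}\le C\big(\|\Dv z\|_{H^{-1}(B)}+\|\Curl z\|_{H^{-1}(B)}+\|z\|_{H^{-1}(B)}\big)$, so that both pairings become weak--strong. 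This is a legitimate and well-known alternative; what it buys is transparency (each half of the decomposition dies for a visibly different reason) and no harmonic analysis beyond elliptic regularity, at the price of making that one interior estimate carry the entire weight of the lemma.

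Two points deserve care in a complete write-up. First, the interior estimate is the crux and is asserted rather than proved; it is true, and your identity $\Delta z_k=\partial_k(\Dv z)+\sum_l\partial_l(\Curl z)_{kl}$ (which does hold with the paper's convention $(\Curl U)_{kl}=\partial_l U_k-\partial_k U_l$) is the right starting point, but one still has to run the cutoff argument: for $\chi\in C_0^\infty(B)$ with $\chi\equiv 1$ on $\omega$, the function $u=\chi z_k$ is compactly supported, satisfies $\|u\|_{L^2}^2\le C\big(\|\Delta u\|_{H^{-2}}^2+\|u\|_{H^{-1}}^2\big)$ by Plancherel, and the commutator terms produced by $\chi$ are bounded by $\|z\|_{H^{-1}(B)}$ because multiplication by smooth compactly supported functions is bounded on $H^{-1}$. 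Since this estimate is the whole point, it should be proved or cited precisely. Second, Rellich gives only precompactness, so ``$z^j$ is Cauchy in $H^{-1}$'' holds a priori only along subsequences; your closing subsequence-principle paragraph does repair this, but you can avoid subsequences entirely: $z^j\wcon 0$ weakly in $L^2(B)$ and the compactness of the embedding $L^2(B)\hookrightarrow H^{-1}(B)$ already give $z^j\to 0$ strongly in $H^{-1}(B)$ for the full sequence (compact operators turn weak convergence into strong convergence), so the interior estimate applies directly to $z^j$ rather than to differences, and likewise $p^j\to 0$ strongly in $L^2(B)$ for the full sequence.
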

We refer to \cite{T} for proof and  to \cite{CDM} for some recent developments on the div-curl lemma.

\subsection*{Proof of Theorem \ref{thm1}}
Let $u^j$ be an approximating sequence of $(\ref{eq0})$ in $H^1(\Omega_T)$ such that $u^j$ converges weakly to $ u^*$ in $H^1(\Omega_T).$ By selecting a subsequence if necessary, we assume that $u^j\to u^*$ strongly in $L^2(\Omega_T)$ and $\sigma(Du^j)\wcon \bar \sigma$ weakly in $L^2(\Omega_T;\R^n).$ 
 
 Clearly, $u^*_t=\dv \bar\sigma$ in $H^{-1}(\Omega_T).$ 
 
   As above, let $g(p,\beta)$ be the convex hull of $|\sigma(p)-\beta|^2.$ Then
\begin{equation}\label{g-growth}
0\le g(p,\beta)\le c_1(|p|^2+|\beta|^2+1)\quad \forall\;(p,\beta)\in \R^n\times \R^n.
\end{equation}
 Since   $g(Du^j,\sigma(Du^j))=0$, by convexity and (\ref{g-growth}),  we have 
\[
0\le \int_{\Omega_T} g(Du^*(x,t),\bar\sigma(x,t))\,dxdt\le \liminf_{j\to\infty}\int_{\Omega_T} g(Du^j,\sigma(Du^j))=0;
\]
hence,  $g(Du^*(x,t),\bar\sigma(x,t))=0$ and 
\[
\bar\sigma(x,t)\in Z(Du^*(x,t)) \quad a.e.\;\; \Omega_T.
\]
Let $V^j=(\sigma(Du^j),-u^j)$ and $W^j=(Du^j,u^j_t)$ be  functions in $L^2(\Omega_T;\R^{n+1}).$ 
Then $V^j\wcon V^*=(\bar\sigma,-u^*)$ and $W^j\wcon  W^*=(Du^*,u^*_t)$ both weakly in $L^2(\Omega_T;\R^{n+1})$. 
Moreover,
$\Dv V^j=\dv \sigma(Du^j)-u^j_t\to 0$ strongly in $H^{-1}(\Omega_T)$  and $\Curl W^j=0$  in $H^{-1}(\Omega_T;\M^{(n+1)\times (n+1)}).$ Hence, by the div-curl lemma,   it follows that
$
V^j\cdot W^j \wcon V^*\cdot W^*$ in the sense of distributions on $\Omega_T$; that is,
 \[
 \lim_{j\to\infty}\int_{\Omega_T} \phi V^j\cdot W^j \,dxdt = \int_{\Omega_T} \phi V^*\cdot W^*\,dxdt\quad \forall \,  \phi\in C^\infty_0(\Omega_T).
 \]
Since  $V^j\cdot W^j =\sigma(Du^j)\cdot Du^j-u^ju^j_t$ and $V^*\cdot W^*=\bar\sigma \cdot Du^* -u^*u^*_t,$   we thus obtain
\begin{equation}\label{eq00}
 \lim_{j\to\infty}\int_{\Omega_T} \phi \sigma(Du^j)\cdot Du^j \,dxdt = \int_{\Omega_T} \phi \bar\sigma \cdot Du^*\,dxdt\quad \forall \,  \phi\in C^\infty_0(\Omega_T).
\end{equation}

From (\ref{eq00}), we have, for all $q\in\R^n$ and $\phi\in C_0^\infty(\Omega_T)$,
\[
 \lim_{j\to\infty}\int_{\Omega_T} \phi (\sigma(Du^j)-\sigma(q))\cdot (Du^j -q)\,dxdt = \int_{\Omega_T} \phi (\bar\sigma-\sigma(q)) \cdot (Du^*-q)\,dxdt.
\]
Let $q\in \Lambda$; then, for all $\phi\in C^\infty_0(\Omega_T)$ with $\phi\ge 0$, we have
\[
\int_{\Omega_T} \phi (\bar\sigma-\sigma(q)) \cdot (Du^*-q)\,dxdt \ge 0.
\]
This implies that $(\bar\sigma-\sigma(q)) \cdot (Du^*-q)\ge 0$ almost everywhere in $\Omega_T$, which is true for all $q\in\Lambda$, and hence  we have
\[
\bar\sigma(x,t) \in \Gamma(Du^*(x,t)) \quad a.e. \,\;  \Omega_T.
\]

Therefore, $\bar\sigma(x,t)\in \Sigma(Du^*(x,t))$  almost everywhere in $\Omega_T$ and $u^*_t=\dv \bar\sigma$ in $H^{-1}(\Omega_T)$. This completes the proof.

\section{Associated first-order system and  variational problem: Proof of Theorem \ref{thm2}} 
 
 Let $u\in L^2(\Omega_T)$ and $v \in L^2(\Omega_T;\R^{n})$ be  such that $Du,v_t,  \dv v$ are of  $L^2(\Omega_T).$ For such $(u,v)$, we introduce  the functional
\[
I(u,v)=\|v_t-\sigma(Du) \|^2_{L^2(\Omega_T)} + \|u-\dv v\|^2_{L^2(\Omega_T)}.
\]
If $(u,v)$ solves the  first-order differential system
 \begin{equation}
\label{sys0} 
u=\dv v,\;\; v_t=\sigma(Du) \quad \mbox{on $\; \Omega_T$}
\end{equation} 
  in the sense of distribution, then $u$ is a  solution of (\ref{eq0}) in the same sense.  
 
\begin{lem}\label{lem3} Let $w=(u,v)\in L^2(\Omega_T;\R^{1+n})$ be  such that $Du,v_t,  \dv v$ are of  $L^2(\Omega_T).$ Then
\[
\|u_t-\dv \sigma(Du)\|_{H^{-1}(\Omega_T)} \le  \sqrt{I(u,v)}. 
\]
Therefore, if a sequence $(u^j,v^j)$ of such functions satisfies $I(u^j,v^j)\to 0$, then   $u^j$ is an approximating sequence $u^j$ of $(\ref{eq0})$. 
\end{lem}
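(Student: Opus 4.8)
The plan is to work directly from the duality definition of the $H^{-1}$-norm and to reduce the whole estimate to a single cancellation coming from the equality of mixed space--time derivatives. Throughout I write the space--time variable as $(x,t)$ and abbreviate $a:=v_t-\sigma(Du)$ and $b:=u-\dv v$; by the growth condition (\ref{gr0}) we have $\sigma(Du)\in L^2(\Omega_T;\R^n)$, so that $a,b\in L^2(\Omega_T)$ and $I(u,v)=\|a\|_{L^2(\Omega_T)}^2+\|b\|_{L^2(\Omega_T)}^2$ is finite.

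First I would fix $\phi\in C_0^\infty(\Omega_T)$ and expand the distributional pairing
\[
\langle u_t-\dv\sigma(Du),\,\phi\rangle=-\int_{\Omega_T}u\,\phi_t\,dxdt+\int_{\Omega_T}\sigma(Du)\cdot D\phi\,dxdt.
\]
Substituting $u=\dv v+b$ and $\sigma(Du)=v_t-a$ splits this into a ``$v$-part'' $-\int_{\Omega_T}(\dv v)\phi_t+\int_{\Omega_T}v_t\cdot D\phi$ and an ``error part'' $-\int_{\Omega_T}b\,\phi_t-\int_{\Omega_T}a\cdot D\phi$.

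The key step, which I expect to be the crux of the argument, is that the $v$-part vanishes. Using the definitions of the weak derivatives $\dv v$ and $v_t$ to move all derivatives onto the smooth test function $\phi$ (no boundary terms appear since $\phi$ has compact support), the first integral becomes $\int_{\Omega_T}\sum_i v_i\,\partial_{x_i}\partial_t\phi$ while the second becomes $-\int_{\Omega_T}\sum_i v_i\,\partial_t\partial_{x_i}\phi$; these cancel precisely because $\partial_{x_i}\partial_t\phi=\partial_t\partial_{x_i}\phi$ for $\phi\in C_0^\infty(\Omega_T)$. Note that this cancellation needs only $v\in L^2$ together with the smoothness of $\phi$, the hypotheses $v_t,\dv v\in L^2$ serving merely to guarantee that the original integrals are finite. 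Hence
\[
\langle u_t-\dv\sigma(Du),\,\phi\rangle=-\int_{\Omega_T}b\,\phi_t\,dxdt-\int_{\Omega_T}a\cdot D\phi\,dxdt.
\]

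Finally I would estimate this by Cauchy--Schwarz in each integral and then across the two terms, obtaining
\[
|\langle u_t-\dv\sigma(Du),\,\phi\rangle|\le\|b\|_{L^2}\|\phi_t\|_{L^2}+\|a\|_{L^2}\|D\phi\|_{L^2}\le\sqrt{\|a\|_{L^2}^2+\|b\|_{L^2}^2}\;\|\nabla_{x,t}\phi\|_{L^2},
\]
where $\nabla_{x,t}\phi=(D\phi,\phi_t)$, so the right-hand side equals $\sqrt{I(u,v)}\,\|\nabla_{x,t}\phi\|_{L^2}$. Since $\|\nabla_{x,t}\phi\|_{L^2}\le\|\phi\|_{H_0^1(\Omega_T)}$, taking the supremum over $\phi\in C_0^\infty(\Omega_T)$ with $\|\phi\|_{H_0^1(\Omega_T)}\le1$ (which computes the $H^{-1}$-norm by density of $C_0^\infty$ in $H_0^1$) yields the claimed bound $\|u_t-\dv\sigma(Du)\|_{H^{-1}(\Omega_T)}\le\sqrt{I(u,v)}$. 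The last assertion is then immediate: applying this inequality to each pair $(u^j,v^j)$ and using $I(u^j,v^j)\to0$ gives $\|u^j_t-\dv\sigma(Du^j)\|_{H^{-1}(\Omega_T)}\to0$, which is exactly condition (\ref{approx1}) defining an approximating sequence.
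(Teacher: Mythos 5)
Your proof is correct and takes essentially the same approach as the paper: the same decomposition of the pairing $\langle u_t-\dv\sigma(Du),\phi\rangle$ into the $(u-\dv v)$- and $(v_t-\sigma(Du))$-parts, with the crux being the identity $\int_{\Omega_T}\phi_t\,\dv v\,dxdt=\int_{\Omega_T}v_t\cdot D\phi\,dxdt$ (which you justify, correctly, by moving both derivatives onto $\phi$ and commuting mixed partials), followed by Cauchy--Schwarz and the duality definition of the $H^{-1}$-norm. The only cosmetic difference is that you work with $\phi\in C_0^\infty(\Omega_T)$ throughout and invoke density at the final supremum, whereas the paper extends the cancellation identity to $\phi\in H_0^1(\Omega_T)$ by density first.
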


\begin{proof} If $\phi\in C^\infty_0(\Omega_T)$, then 
\[
\int_{\Omega_T} \phi_t \dv v  \,dxdt=\int_{\Omega_T} v_t\cdot D\phi \,dxdt.
\]
By density, this equality also holds if $\phi\in H^1_0(\Omega_T).$ Hence, for all $\phi\in H^1_0(\Omega_T)$, we have 
\[
\langle u_t-\dv \sigma(Du),\phi\rangle =-\int_{\Omega_T} u\phi_t \,dxdt +\int_{\Omega_T} \sigma(Du)\cdot D\phi\,dxdt
\]
\[
=-\int_{\Omega_T} (u-\dv v) \phi_t\,dxdt + \int_{\Omega_T} (\sigma(Du)-v_t)\cdot D\phi\,dxdt 
\]
\[
\le  \|u-\dv v\|_{L^2(\Omega_T)}\, \|\phi_t\|_{L^2(\Omega_T)}+ \|v_t-\sigma(Du) \|_{L^2(\Omega_T)}\, \|D\phi\|_{L^2(\Omega_T)}\]
\[
\le \sqrt{I(u,v)}  \, (\|\phi_t\|^2_{L^2(\Omega_T)} +\|D\phi\|^2_{L^2(\Omega_T)} )^{1/2} \le   \sqrt{I(u,v)} \,\|\phi\|_{H^1_0(\Omega_T)}.
\] 
Therefore,
\[
\|u_t-\dv \sigma(Du)\|_{H^{-1}(\Omega_T)}=\sup_{\|\phi\|_{H^1_0(\Omega_T)}\le 1} \langle u_t-\dv \sigma(Du),\phi\rangle \le  \sqrt{I(u,v)}. 
\]
\end{proof}

We now put the functional $I(u,v)$ in the context of the calculus of variations. 

In the following, for a vector function $w=(u,v)\in W^{1,1}_{loc}(\Omega_T;\R^{1+n})$, where $u\colon \Omega_T\to\R$ and $v\colon\Omega_T\to \R^n$,  we write its  gradient $\nabla w(x,t)$ as a matrix
\[
\nabla w(x,t) = \begin{pmatrix}
Du(x,t)&u_t(x,t)\\Dv(x,t)&v_t(x,t)
\end{pmatrix},
\]
where $Du$ is considered as a $1\times n$ matrix, $Dv$ as an $n\times n$ matrix, and $v_t$ as an $n\times 1$ matrix.  

Accordingly, we denote by $\MN$ the space of $(1+n)\times (n+1)$ matrices $\xi$  written as 
$\xi=\begin{pmatrix}
p&r\\B&\beta
\end{pmatrix}$,
where  $p$ is a $1\times n$ matrix  viewed as a vector in $\R^n$,  $\beta$ is an $n\times  1$ matrix  also viewed  as a vector in $\R^n$, $r\in\R$ is a scalar, and $B\in \M^{n\times n}$ is an $n\times n$ matrix.
                 
Then, for $w=(u,v)\in H^1(\Omega_T;\R^{1+n})$,  the functional $I(u,v)$ defined above can be written as an integral  functional 
\begin{equation}\label{fun-I}
I(u,v)=I(w)=\int_{\Omega_T} f(w(x,t),\nabla w(x,t))dxdt
\end{equation}
with  the density function  $f\colon \R^{1+n}\times \MN\to \R$ given by
\begin{equation}\label{fun-f}
f(\omega,\xi)=|\sigma(p)-\beta|^2 + (s-\tr B)^2,
\end{equation}
where $\omega=(s,z)\in\R^{1+n}$ and $\xi=\begin{pmatrix}
p&r\\B&\beta
\end{pmatrix}\in \MN.$  

Clearly, $f$ satisfies the growth condition
\begin{equation}\label{f-growth}
0\le f(\omega,\xi)\le C(|\omega|^2 +|\xi|^2+1) 
\end{equation}
on $ \R^{1+n}\times \MN$. Define
\begin{equation}\label{sharp-f}
f^\#(\omega,\xi)=\inf_{\zeta\in  C^\infty_0(Q;\R^{1+n})} \frac{1}{|Q|}\int_Q f(\omega,\xi+\nabla\zeta(x,t))dxdt;
\end{equation}
here $Q\subset \R^{n+1}$ is a cube. 

Then, for each $\omega\in\R^{1+n}$, $f^\#(\omega,\xi)$ is independent of $Q$ and  quasiconvex in $\xi$ in the sense of Morrey \cite{M} (see also \cite{AF,B,D}).

\begin{pro}\label{lem2}  One has that $f^\#(\omega,\xi)=g(p,\beta)+(s-\tr B)^2,$ where  $g(p,\beta)$ is the convex hull of $|\sigma(p)-\beta|^2$ on $\R^n\times \R^n.$
\end{pro}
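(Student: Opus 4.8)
The plan is to prove the identity $f^\#(\omega,\xi)=g(p,\beta)+(s-\tr B)^2$ by exploiting the additive structure of the density $f(\omega,\xi)=|\sigma(p)-\beta|^2+(s-\tr B)^2$ together with the special block form of the gradient $\nabla\zeta$ of a test function $\zeta=(\phi,\psi)\in C^\infty_0(Q;\R^{1+n})$. Writing $\xi+\nabla\zeta$ in the block notation, the perturbation of the block $p$ is $D\phi$, the perturbation of $\beta$ is $\psi_t$, and the perturbation of $\tr B$ is $\tr(D\psi)=\dv\psi$. Thus the first term of $f$ becomes $|\sigma(p+D\phi)-(\beta+\psi_t)|^2$ and the second becomes $(s+\phi_t-\tr B-\dv\psi)^2$, so the infimum in \eqref{sharp-f} decouples in a controllable way.

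First I would establish the upper bound $f^\#(\omega,\xi)\le g(p,\beta)+(s-\tr B)^2$. For the second term, note that choosing $\phi$ to satisfy $\phi_t=\dv\psi$ on average — or more simply, choosing test functions that make the average of $(s+\phi_t-\tr B-\dv\psi)^2$ small — I can drive the second term to zero using, for instance, a scalar potential whose time derivative cancels $\dv\psi$; the key point is that $\phi$ appears only through $\phi_t$ in the second term and only through $D\phi$ in the first, giving enough freedom. For the first term, I would use the characterization of the convex hull $g(p,\beta)$ as an infimum over probability measures (or finite convex combinations) and realize the optimal combination via a laminate-type construction, i.e. oscillating test functions whose gradients $(D\phi,\psi_t)$ take prescribed values on a partition of $Q$ with the correct volume fractions, so that the average of $|\sigma(p+D\phi)-(\beta+\psi_t)|^2$ approaches $g(p,\beta)$.

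Next I would prove the matching lower bound $f^\#(\omega,\xi)\ge g(p,\beta)+(s-\tr B)^2$. Here the two terms can be bounded below separately. For the first term, since $g$ is the convex hull of $|\sigma(p)-\beta|^2$ and convexity implies quasiconvexity for functions depending only on the pair $(p,\beta)$ (which are affine images of the gradient blocks), Jensen's inequality against the weak limit gives $\frac{1}{|Q|}\int_Q |\sigma(p+D\phi)-(\beta+\psi_t)|^2\,dxdt\ge g(p,\beta)$, because the averages of $D\phi$ and $\psi_t$ over $Q$ vanish for compactly supported $\zeta$. For the second term, the average of $(s+\phi_t-\tr B-\dv\psi)^2$ is minimized, by convexity and again the vanishing of the mean of $\phi_t-\dv\psi$, exactly at $(s-\tr B)^2$.

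The main obstacle I anticipate is the upper bound for the first term: realizing the convex hull $g(p,\beta)$ through admissible gradient fields. The subtlety is that $g$ is the convex hull in \emph{both} variables $(p,\beta)$ jointly, so the optimal convex combination may mix different values of both $p$ and $\beta$ simultaneously, and I must produce these as $(p+D\phi,\beta+\psi_t)$ from a single pair of potentials $(\phi,\psi)$ with the correct mean-zero constraint. Because $D\phi$ ranges over gradients and $\psi_t$ over time-derivatives of an $\R^n$-valued map, these blocks are essentially unconstrained rank-one-type directions in the relevant $(p,\beta)$-plane, so any finite convex combination should be approximable by a suitable simple laminate; making this rigorous — checking that the required jump directions are compatible with the differential constraints and that boundary-layer corrections contribute negligibly to the average — is the technical heart of the argument. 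Everything else reduces to the growth bound \eqref{f-growth}, the definition of $g$, and standard quasiconvexity facts cited after \eqref{sharp-f}.
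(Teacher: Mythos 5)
Your Jensen lower bound $f^\#(\omega,\xi)\ge g(p,\beta)+(s-\tr B)^2$ is sound and coincides with the paper's own Jensen step, but the rest of the proposal has two serious problems. First, you mis-expand the integrand: in the definition \eqref{sharp-f} only the gradient slot $\xi$ is perturbed, while $\omega=(s,z)$ is frozen, and $\phi_t$ lands in the slot $r$, which does not occur in $f$ at all. The perturbed integrand is $|\beta+\psi_t-\sigma(p+D\phi)|^2+(s-\tr B-\dv\psi)^2$, with no $\phi_t$ anywhere. Consequently your mechanism for the second term --- ``a scalar potential whose time derivative cancels $\dv\psi$'' --- has nothing to act on; and even under your (incorrect) formula it would fail, because $D\phi$ and $\phi_t$ are components of a single gradient and cannot be prescribed independently (in a laminate both oscillate along the same direction), and because ``driving the second term to zero'' is impossible in any case: since $\int_Q\dv\psi\,dxdt=0$, Jensen gives that its average is always at least $(s-\tr B)^2$, exactly as your own lower bound asserts.

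Second, and this is the genuine gap: after the correct expansion the proposition reduces to showing that $\tilde g(p,\beta)=\inf\frac1{|Q|}\int_Q\bigl(|\beta+\psi_t-\sigma(p+D\phi)|^2+(\dv\psi)^2\bigr)dxdt$ equals the convex hull $g$, and the nontrivial inequality $\tilde g\le g$ is precisely the upper bound you defer as ``the technical heart.'' Your key claim there --- that the required jump directions are compatible with the differential constraints --- is false as stated: a rank-one matrix $a\otimes b$ with $a=(a_0,a')\in\R^{1+n}$, $b=(b',b_0)\in\R^{n+1}$ has $(D\phi,\psi_t)$-part $(a_0b',b_0a')$ and carries the $\dv\psi$-jump $a'\cdot b'$, so every direction that mixes $p$ and $\beta$ pollutes the second term; for $n=1$, a zero $\dv\psi$-jump forces a pure-$p$ or pure-$\beta$ jump, and such laminates only produce the separately convex hull, which in general is strictly larger than $g$. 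The construction can be rescued only by exploiting the unused slot $r$: take $a=(\lambda,\delta\beta/\mu)$, $b=(\delta p/\lambda,\mu)$ with $\lambda\mu\to\infty$, so the $\dv\psi$-jump $(\delta p\cdot\delta\beta)/(\lambda\mu)$ vanishes at the price of a huge (harmless) $\phi_t$-component, and then run an iterated laminate with boundary corrections. You never identify this, so the proposal does not prove the statement. The paper sidesteps the construction entirely with a soft argument: mollify $f^\#$, use that quasiconvexity implies rank-one convexity along directions with jumps $q$ in $p$, $\alpha$ in $\beta$, $1$ in the $r$-slot and $\alpha\cdot q$ in $\tr B$, and note that in the resulting inequality $l''(0)\ge 0$ the offending term $2(\alpha\cdot q)^2$ is quartic in $(q,\alpha)$ and hence disappears under the scaling $(q,\alpha)\mapsto(kq,k\alpha)$, $k\to 0$; this proves $\tilde g$ is convex, and since $\tilde g\le|\sigma(p)-\beta|^2$, maximality of the convex hull gives $\tilde g\le g$ with no laminate construction at all.
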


\begin{proof} Given  $\zeta=(\phi,\psi)\in  C^\infty_0(Q;\R^{1+n}),$  we have
\[
f(\omega,\xi+\nabla\zeta) = |\beta+\psi_t-\sigma(p+D\phi)|^2 +(s-\tr B)^2-2(s-\tr B) (\dv \psi) + (\dv \psi)^2,
\]
and hence $f^\#(\omega,\xi)=\tilde g(p,\beta)+(s-\tr B)^2,$ where
\[
\tilde g(p,\beta)=  \inf_{(\phi,\psi)\in  C^\infty_0(Q;\R^{1+n})} \frac{1}{|Q|}\int_Q  \Big (|\beta+\psi_t-\sigma(p+D\phi)|^2 +(\dv \psi)^2\Big ) dxdt.
\]
Clearly,  $\tilde g(p,\beta)\le | \sigma(p)-\beta|^2.$   

Let $\rho_\epsilon(\xi)=\tilde \rho_\epsilon(|\xi|)$ be a standard mollifying sequence on $\MN$ and let 
$f_\epsilon^\#(\omega,\cdot) =f^\#(\omega,\cdot)  * \rho_\epsilon(\cdot)$ be the smoothing sequence of $f^\#.$   Since $f^\#(\omega,\xi)$ is quasiconvex (and thus continuous) in $\xi$, it follows that  $f_\epsilon^\#(\omega,\xi)$ is quasiconvex and smooth in $\xi$ for each $\epsilon>0.$ Write $f_\epsilon^\#=g_\epsilon + h_\epsilon$, where  
\[
g_\epsilon =\tilde g * \rho_\epsilon=g_\epsilon(p,\beta),\quad h_\epsilon  =(s-\tr B)^2 * \rho_\epsilon =(s-\tr B)^2 + c_\epsilon \]
 with $c_\epsilon$ being a constant such that $c_\epsilon\to 0$ as $\epsilon\to 0^+.$

From the quasiconvexity of $f_\epsilon^\#(\omega,\xi)$ in $\xi$, it follows that $f_\epsilon^\#(\omega,\xi)$ is rank-one convex in $\xi$; see \cite{B,D}. Therefore, for each rank-one matrix $\eta$ in $\MN$ of the form
\[
\eta=\begin{pmatrix}
1\\\alpha
\end{pmatrix} \otimes \begin{pmatrix}
q&1
\end{pmatrix}=\begin{pmatrix}
q&1\\\alpha\otimes q & \alpha
\end{pmatrix} \quad (q\in\R^n,\; \alpha\in \R^n),
\]
 the function $l(t)=f_\epsilon^\#(\omega,\xi+t\eta)=g_\epsilon(p+tq,\beta+t\alpha)+(s-\tr B-t\alpha\cdot q)^2 +c_\epsilon$  is a convex function of $t$ on $\R.$  Hence
\[
l''(0)=[(g_\epsilon)_{pp}(p,\beta) q]\cdot q + 2 [(g_\epsilon)_{p\beta}(p,\beta) \alpha]\cdot q + [(g_\epsilon)_{\beta\beta}(p,\beta) \alpha] \cdot \alpha + 2 (\alpha\cdot q)^2\ge 0.
\]
In this inequality, replace $(q,\alpha)$ by $(kq,k\alpha)$ with $k>0$, cancel $k^2$ and  let $k\to 0.$ Then it follows that
\[
[(g_\epsilon)_{pp}(p,\beta) q]\cdot q + 2 [(g_\epsilon)_{p\beta}(p,\beta) \alpha]\cdot q + [(g_\epsilon)_{\beta\beta}(p,\beta) \alpha] \cdot \alpha\ge 0
\]
for all $p,\beta, q,\alpha \in\R^n$. This proves that $g_\epsilon$ is convex on $\R^n\times \R^n$ for each $\epsilon>0;$ hence, $\tilde g$  is convex on $\R^n\times \R^n$. 

Finally, if $h(p,\beta)$ is a convex function satisfying  $h(p,\beta)\le | \sigma(p)-\beta|^2$ for all $p,\beta \in \R^n,$ then, by Jensen's inequality, for all $(\phi,\psi)\in C^\infty_0(Q;\R^{1+n}),$  
\[
h(p,\beta)\le \frac{1}{|Q|}\int_Q h(p+D\phi,\beta+\psi_t)dx dt 
\le \frac{1}{|Q|}\int_Q  |\beta+\psi_t-\sigma(p+D\phi)|^2  dxdt 
\]
\[
\le \frac{1}{|Q|}\int_Q  (|\beta+\psi_t-\sigma(p+D\phi)|^2 +(\dv \psi)^2) dxdt.
\]
Hence $h(p,\beta)\le \tilde g(p,\beta);$ therefore, $\tilde g(p,\beta)$ is the largest convex function below the function  $| \sigma(p)-\beta|^2,$ and thus  $\tilde g=g.$
\end{proof}

\begin{remk}  Given $\omega\in\R^{1+n}$, let $Rf(\omega,\cdot)$ be the {\em rank-one convex hull} of $f(\omega,\cdot)$ on $\MN$  (see \cite{D}); then, it can be shown that
\[
Rf(\omega,\xi)=h(p,\beta)+(s-\tr B)^2
\]
 for some function $h(p,\beta)\ge g(p,\beta).$ Similarly as  in the proof of Proposition \ref{lem2}, it follows  that $h$ is convex, and hence $h=g$ and 
\[
Rf(\omega,\xi)=f^\#(\omega,\xi)=g(p,\beta)+(s-\tr B)^2.
\] 
Therefore, the rank-one convex hull of $f$ does not produce more special features than the quasiconvex hull of $f$.
\end{remk}

The following result shows that some special structures in the set $\Sigma(p)$ defined above  have a variational description; this result also shows that the subsolutions $\bar u$ used in \cite{KY2,KY3}  automatically satisfy $\bar u_t\in \dv \Sigma(D\bar u)$.

\begin{pro}\label{pro1}
For  $\lambda>0,$ let
\begin{equation}\label{g-lambda}
g_\lambda(p,\beta) =  \inf_{{(\phi,\psi)\in  C^\infty_0(Q;\R^{1+n})}\atop { \|D\phi\|_{L^2(Q)}< \lambda, \; \dv\psi=0}} \frac{1}{|Q|}\int_Q    |\beta+\psi_t-\sigma(p+D\phi)|^2 \, dxdt.
\end{equation}
Then, $\beta\in\Sigma(p)$ if $g_\lambda(p,\beta)=0.$
\end{pro}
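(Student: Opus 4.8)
The plan is to prove the two inclusions $\beta\in Z(p)$ and $\beta\in\Gamma(p)$ separately, since $\Sigma(p)=\Gamma(p)\cap Z(p)$ by definition. The membership $\beta\in Z(p)$ follows almost immediately by comparing $g_\lambda$ with the quasiconvexification computed in Proposition~\ref{lem2}; the membership $\beta\in\Gamma(p)$ is the substantive part, and it is where the two constraints $\dv\psi=0$ and $\|D\phi\|_{L^2(Q)}<\lambda$ enter.

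For the first inclusion, observe that on the admissible class for $g_\lambda$ one has $\dv\psi=0$, so the integrand $|\beta+\psi_t-\sigma(p+D\phi)|^2$ coincides with the integrand $|\beta+\psi_t-\sigma(p+D\phi)|^2+(\dv\psi)^2$ appearing in the formula for $\tilde g$ in the proof of Proposition~\ref{lem2}. Since $g_\lambda$ minimizes this quantity over a \emph{smaller} class of test functions, one gets $g_\lambda(p,\beta)\ge \tilde g(p,\beta)=g(p,\beta)\ge0$. Hence $g_\lambda(p,\beta)=0$ forces $g(p,\beta)=0$, i.e. $\beta\in Z(p)$.

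For the inclusion $\beta\in\Gamma(p)$, I would fix $q\in\Lambda$ and choose a minimizing sequence $(\phi_k,\psi_k)\in C^\infty_0(Q;\R^{1+n})$ with $\dv\psi_k=0$, $\|D\phi_k\|_{L^2(Q)}<\lambda$, and $\sigma(p+D\phi_k)-\big(\beta+(\psi_k)_t\big)\to0$ in $L^2(Q)$. Writing $\eta_k=p+D\phi_k$, monotonicity of $\sigma$ at $q\in\Lambda$ gives the pointwise inequality $(\sigma(\eta_k)-\sigma(q))\cdot(\eta_k-q)\ge0$, which after splitting $\eta_k-q=(p-q)+D\phi_k$ and integrating reads
\[
(p-q)\cdot\!\int_Q\!\big(\sigma(\eta_k)-\sigma(q)\big)\,dxdt \;+\;\int_Q\!\big(\sigma(\eta_k)-\sigma(q)\big)\cdot D\phi_k\,dxdt\;\ge\;0.
\]
The plan is then to pass to the limit in both integrals. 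For the first, writing $\int_Q\sigma(\eta_k)=\int_Q\big(\sigma(\eta_k)-\beta-(\psi_k)_t\big)+|Q|\beta+\int_Q(\psi_k)_t$ and using $\int_Q(\psi_k)_t=0$ (compact support) together with the $L^2$-smallness of the first term, one gets $\int_Q\sigma(\eta_k)\to|Q|\beta$, whence $\int_Q(\sigma(\eta_k)-\sigma(q))\to|Q|(\beta-\sigma(q))$.

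The key step is to show the cross term vanishes in the limit. Replacing $\sigma(\eta_k)$ by $\beta+(\psi_k)_t$ — the error being controlled by $\|\sigma(\eta_k)-\beta-(\psi_k)_t\|_{L^2}\,\|D\phi_k\|_{L^2}\le \lambda\,\|\sigma(\eta_k)-\beta-(\psi_k)_t\|_{L^2}\to0$, which is exactly where the bound $\|D\phi_k\|_{L^2}<\lambda$ is used — reduces the cross term to $\int_Q(\beta+(\psi_k)_t-\sigma(q))\cdot D\phi_k$. Here $\int_Q D\phi_k=0$ since $\phi_k$ has compact support, and integrating by parts twice yields $\int_Q(\psi_k)_t\cdot D\phi_k=\int_Q(\dv\psi_k)(\phi_k)_t=0$ because $\dv\psi_k=0$. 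Thus the cross term tends to $0$, the displayed inequality passes to the limit to give $(\beta-\sigma(q))\cdot(p-q)\ge0$, and since $q\in\Lambda$ was arbitrary we conclude $\beta\in\Gamma(p)$, hence $\beta\in\Sigma(p)$. The main obstacle is precisely this cancellation: neither constraint alone suffices, for $\dv\psi=0$ is what annihilates $\int_Q(\psi_k)_t\cdot D\phi_k$, while the a priori bound $\|D\phi_k\|_{L^2}<\lambda$ is what allows trading $\sigma(\eta_k)$ for $\beta+(\psi_k)_t$ without losing control of the cross term.
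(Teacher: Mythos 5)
Your proposal is correct and follows essentially the same route as the paper's proof: both obtain $\beta\in Z(p)$ by comparing $g_\lambda$ with $\tilde g=g$ from Proposition~\ref{lem2}, and both obtain $\beta\in\Gamma(p)$ by taking a minimizing sequence, invoking the monotonicity inequality at $q\in\Lambda$, killing the term $\int_Q\psi_t\cdot D\phi$ via integration by parts using $\dv\psi=0$, and controlling the error term against $D\phi$ by Cauchy--Schwarz using the bound $\|D\phi\|_{L^2(Q)}<\lambda$. The only difference is cosmetic (you split $\eta_k-q=(p-q)+D\phi_k$ before substituting, while the paper substitutes $\sigma(p+D\phi^j)=\beta+\psi^j_t-\eta^j$ first), so the two arguments are the same in substance.
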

 \begin{proof} Assume $g_\lambda(p,\beta)=0$. From the formula of $g=\tilde g$ in the proof of Proposition \ref{lem2}, it follows  that $0\le g\le g_\lambda;$ hence $\beta\in Z(p).$ To prove $\beta\in \Gamma(p),$ let $(\phi^j,\psi^j)\in  C^\infty_0(Q;\R^{1+n})$ be such that $\|D\phi^j\|_{L^2(Q)}< \lambda, \; \dv\psi^j=0,$ and
 \[
\lim_{j\to\infty} \int_Q    |\beta+\psi^j_t-\sigma(p+D\phi^j)|^2 \, dxdt =0.
\]
Let $\eta^j=\beta+\psi^j_t-\sigma(p+D\phi^j).$ Then $\eta^j\to 0$ in $L^2(Q).$ Let $q\in \Lambda.$ Then
$(\sigma(p+D\phi^j)-\sigma(q))\cdot (p+D\phi^j-q)\ge 0;$ hence
\begin{equation}\label{ineq-7}
\int_Q (\beta+\psi^j_t -\eta^j-\sigma(q))\cdot (p+D\phi^j-q)\,dxdt\ge 0.
\end{equation}
Since 
\[
\int_Q \psi^j_t\cdot D\phi^j\,dxdt=\int_Q \phi^j_t\dv\psi^j\,dxdt=0,
\]
  $\eta^j\to 0$ in $L^2(Q)$ and $\|D\phi^j\|_{L^2(Q)}<\lambda$, in (\ref{ineq-7}) letting $j\to\infty$, it follows that
  \[
  (\beta-\sigma(q))\cdot (p-q)\ge 0.
  \]
  Hence $\beta\in \Gamma(p);$ this completes the proof.
 \end{proof}

 \subsection*{Proof of Theorem \ref{thm2}} 
Let $\bar w=(\bar u,\bar v)\in H^1(\Omega_T;\R^{1+n})$ satisfy  $\bar u=\dv \bar v$ and $\bar v_t\in Z(D\bar u)$ almost everywhere on $\Omega_T$. Consider the {\em relaxed energy} (see \cite{AF,D})
\[
I^\#(w) =\int_{\Omega_T} f^\#(w(x,t),\nabla w(x,t))dxdt 
\]
for all $w\in H^1(\Omega_T;\R^{1+n}).$ Then $I^\#(\bar w)=0.$ By \cite[Theorem 9.8]{D},  there exists a sequence 
$w^j\in \bar w+ H_0^1(\Omega_T;\R^{1+n})$ such that  $I(w^j)\to 0$ and $w^j\to \bar w$ strongly in $L^2(\Omega_T;\R^{1+n}).$ If $w^j=(u^j,v^j)$ then, by Lemma \ref{lem3}, $u^j\in  \bar u+ H_0^1(\Omega_T)$ is an approximating sequence of (\ref{eq0}) such that $u^j\to \bar u$ in $L^2(\Omega_T).$

This completes the proof.

\end{document}